\documentclass{amsart}
\usepackage[cp1250]{inputenc}
\usepackage{graphicx}
\usepackage{amsmath}
\usepackage{latexsym}
\usepackage{amssymb}
\usepackage{pstricks}
\usepackage{graphics,color}
\usepackage{color}
\usepackage{mathrsfs}
\vfuzz2pt 
\hfuzz2pt 

\newtheorem{theorem}{Theorem}[section]

\newtheorem{lemma}[theorem]{Lemma}

\theoremstyle{definition}
\newtheorem{definition}[theorem]{Definition}
\newtheorem{remark}{Remark}



\newcommand{\om}{\Omega}

\newcommand{\dom}{{\rm d}\Omega}

\newcommand{\dc}{\|}

\newcommand{\haa}{H}

\newcommand{\iom}{\int_\om}

\newcommand{\ldh}{L^2(0,T;\,H)}
\newcommand{\ldsh}{L^2(0,\ts;\,H)}

\newcommand{\nha}{\|_{\haa}}

\newcommand{\nt}{(0,T)}

\newcommand{\omnt}{\om\times\nt}

\newcommand{\ts}{T^*}

\newcommand{\unu}{{\bfu}_0}

\newcommand{\vajd}{V}
\newcommand{\vakp}{V^{k,p}}

\newcommand{\bfb}{\mbox{\boldmath{$b$}}}

\newcommand{\bfn}{\mbox{\boldmath{$n$}}}
\newcommand{\bff}{\mbox{\boldmath{$f$}}}

\newcommand{\bfz}{\mbox{\boldmath{$z$}}}

\newcommand{\bfu}{\mbox{\boldmath{$u$}}}
\newcommand{\bfw}{\mbox{\boldmath{$w$}}}
\newcommand{\bfv}{\mbox{\boldmath{$v$}}}

\newcommand{\bfmathcalF}{\mbox{\boldmath{$\mathcal{F}$}}}
\newcommand{\bfmathcalA}{\mbox{\boldmath{$\mathcal{A}$}}}

\newcommand{\bfphi}{\mbox{\boldmath{$\phi$}}}

\newcommand{\bfpsi}{\mbox{\boldmath{$\psi$}}}

\newcommand{\bfvarphi}{\mbox{\boldmath{$\varphi$}}}

\newcommand{\bftau}{\mbox{\boldmath{$\tau$}}}

\newcommand{\bfzero}{{\bf 0}}

\begin{document}

\title[Navier--Stokes Flows in Polyhedral Domains]{Mixed Initial-Boundary
Value Problem for the Three-Dimensional Navier--Stokes
Equations in Polyhedral Domains}%
\author{Michal Bene\v{s}$^{1,2}$}%
\address{$^{1}$Czech Technical University in Prague,
Faculty of Civil Engineering, Department of
Mathematics, Th\'{a}kurova 7, 166 29 Prague 6, Czech Republic}
\address{$^{2}$Centre for Integrated Design of Advanced Structures,
Th\'{a}kurova 7, 166 29 Prague 6, Czech Republic}
\email{benes@mat.fsv.cvut.cz}%

\thanks{This research was supported by the project GA\v{C}R P201/10/P396.
Additional support from the Ministry of Education, Youth and Sports
of the Czech Republic, project No. 1M0579, within activities of the
CIDEAS research centre is greatly acknowledged.}%
\subjclass[msc2000]{35Q30, 35D10}%
\keywords{Navier--Stokes equations, regularity of generalized solutions, mixed boundary conditions}%

\date{01-02-2011}%

\begin{abstract}
We study a mixed initial--boundary value problem  for the
Navier--Stokes equations, where the Dirichlet, Neumann and slip
boundary conditions are prescribed on the faces of a
three-dimensional polyhedral domain. We prove the existence,
uniqueness and smoothness of the solution on a time interval
$(0,T^*)$, where $0<T^*\leq T$.

\end{abstract}

\maketitle


\section{Introduction}

\subsection{Preliminaries}
We consider a mixed initial--boundary value problem for the
Navier--Stokes equations in a three-dimensional domain $\Omega
\subset \mathbb{R}^3$ of polyhedral type with a boundary $\partial
\Omega$.  The domain $\om$ represents e.g. a channel filled up by a
moving fluid. $\partial\Omega$ consists of nonintersecting pieces
$\Gamma_D$, $\Gamma_G$ and $\Gamma_N$, $\partial \Omega =
\overline{\Gamma_D} \cup \overline{\Gamma_G} \cup
\overline{\Gamma_N}$. $\Gamma_D = \bigcup_{j\in \mathcal{J}_1}
\Gamma_j$ represents solid walls, $\Gamma_G =
\bigcup_{j\in\mathcal{J}_2} \Gamma_j$ denotes uncovered fluid
surfaces and $\Gamma_N = \bigcup_{j\in\mathcal{J}_3} \Gamma_j$
denotes the artificial part of the boundary such as the exit (or a
free surface), $\mathcal{J}_1 \cup \mathcal{J}_2 \cup \mathcal{J}_3=
\left\{1,\dots,n\right\}$ and $\Gamma_i \cap \Gamma_j = \emptyset$
iff $i \neq j$, $i,j\in \left\{1,\dots,n\right\}$.

We study the existence and uniqueness of the solution $\bfu$ to the
Navier--Stokes flows on $(0,T)$, $T>0$, in $\om$ under the following
boundary conditions:
\begin{align}
\bfu &= {\bf 0}  &\textmd{ on }& \Gamma_D\times\nt, \label{Dirichlet} \\
\bfu \cdot \bfn=0, \;\; [\nabla\bfu+(\nabla\bfu)^{\top}]\bfn\cdot
\bftau &= 0 &\textmd{ on }&
\Gamma_G\times\nt, \label{Navier}\\
-\mathcal{P}\bfn+\nu[\nabla\bfu+(\nabla\bfu)^{\top}]\bfn &= {\bf0}
&\textmd{ on }& \Gamma_N\times\nt. \label{artificial}
\end{align}
In \ref{Dirichlet}--\ref{artificial} $\bfu=(u_1,u_2,u_3)$ and
$\mathcal{P}$ denote the unknown velocity and pressure,
respectively. Further, $\nu$ denotes the viscosity of the fluid.
$\bfn= (n_1,n_2,n_3)$ and $\bftau= (\tau_1,\tau_2,\tau_3)$ are unit
normal and tangent vectors, respectively, to $\partial\Omega$.

\subsection{The domain}\label{domain}
It is well known that the regularity results for solutions of
elliptic problems in domains with edges or with the mixed boundary
conditions are closely related to the properties of the boundary of
the domain. Hence we specify several attributes of the domain
$\Omega$, which will be used later. We assume that
\begin{enumerate}

\item[(i)] $\Gamma_i$ (the faces of
$\Omega$), $i=1,\dots,n$, are open two-dimensional manifolds of
class $C^\infty$;

\item[(ii)] the boundary $\partial\Omega$ consists of smooth
faces $\Gamma_i$ (defined above) and smooth (of class $C^\infty$)
nonintersecting curves $\mathcal{M}_k$ (the edges), $k=1,\dots,m$,
vertices on $\partial \Omega$ are excluded;

\item[(iii)] for every $A\in\mathcal{M}_k$, $k=1,\dots,m$,
there exists a neighborhood $\mathcal{U}_A$ and a diffeomorphic
mapping $\kappa_A$ which maps $\Omega \cap \mathcal{U}_A$ onto
$\mathcal{D}_A\cap B_A$, where $\mathcal{D}_A$ is a dihedron of the
form
$$
\left\{[x_1,x_2,x_3]\in\mathbb{R}^3; \; 0<r<\infty, \; -\omega_{A}/2
< \varphi < \omega_{A}/2, \; x_3\in\mathbb{R} \right\},
$$
$\omega_{A}>0$ denotes the angle at the edge $\mathcal{M}_k$,
$A\in\mathcal{M}_k$, and $B_A$ is the unit ball ($r,\varphi$ denote
the polar coordinates in the $(x_1,x_2)$-plane);

\item[(iv)] $\Gamma_i\in\Gamma_D$, i.e. $i\in
\mathcal{J}_1$, forms at least one of the adjoining faces of every
edge $\mathcal{M}_k$, $k=1,\dots,m$;

\item[(v)]
$\left\{
  \begin{array}{ll}

\hbox{for every } A\in\mathcal{M}_k,\; \mathcal{M}_k \subset
\overline{\Gamma_D}\cap\overline{\Gamma_D}, & \hbox{we have
}\omega_{A}<\pi,
\\
\hbox{for every } A\in\mathcal{M}_k,\; \mathcal{M}_k \subset
\overline{\Gamma_D}\cap\overline{\Gamma_G}, & \hbox{we have
}\omega_{A}<(3/4)\pi,
\\
\hbox{for every } A\in\mathcal{M}_k,\; \mathcal{M}_k \subset
\overline{\Gamma_D}\cap\overline{\Gamma_N}, &
\hbox{we have }\omega_{A}<(1/4)\pi. \\

  \end{array}
\right.$

\end{enumerate}

\subsection{Basic notation and some function spaces}
Vector functions and operators acting on vector functions are
denoted by~boldface letters. Unless specified otherwise, we use
Einstein's summation convention for indices running from $1$ to $3$.

For an arbitrary $r\in [1,+\infty]$, $L^r(\Omega)$ denotes the usual
Lebesgue space equipped with the norm $\|\cdot\|_{L^r(\Omega)}$, and
$W^{k,p}(\Omega)$, $k\geq 0$ ($k$ need not to be an integer, see
\cite{KufFucJoh1977}), $1\leq p < \infty$, denotes the usual Sobolev
space with the norm $\|\cdot\|_{W^{k,p}(\Omega)}$.

\smallskip

Let
\begin{displaymath}
{E}:=\left\{\bfu\in C^\infty(\overline{\Omega})^3;\;
\textmd{div}\,\bfu = 0, \, {\textmd{supp}\, \bfu}  \cap \Gamma_D =
\emptyset, \, {\textmd{supp}\, \bfu \cdot \bfn} \cap \Gamma_G =
\emptyset  \right\}
\end{displaymath}
and $\vakp$ be a closure of $E$ in the norm of $W^{k,p}(\om)^3$,
$k\ge 0$ ($k$ need not be an integer) and $1\leq p<\infty$. Then
$\vakp$ is a Banach space with the norm of the space
$W^{k,p}(\om)^3$. For simplicity, we denote $V^{1,2}$ and $V^{0,2}$,
respectively, as $\vajd$ and $\haa$. Note, that $\vajd$ and $\haa$,
respectively, are Hilbert spaces with scalar products
\begin{equation}\label{scalar_V}
((\bfu,\bfv))  = 2\iom \varepsilon_{ij}(\bfu)
\varepsilon_{ij}(\bfv)\,\dom
\end{equation}
and
\begin{equation*}
(\bfu,\bfv) = \iom u_i v_i \,\dom
\end{equation*}
and they are closed subspaces of spaces $W^{1,2}(\om)^3$ and
$L^2(\om)^3$, respectively. In \ref{scalar_V} $e_{ij}(\bfu)$ denotes
the matrix with the components
\begin{displaymath}
e_{ij}(\bfu)=\frac{1}{2}\left(\frac{\partial u_i}{\partial
x_j}+\frac{\partial u_j}{\partial x_i}\right).
\end{displaymath}

Further, define the space
\begin{equation}\label{space_D}
\mathcal{D}:= \left\{\bfu \;|\; \bff \in H,\; ((\bfu,\bfv))
=(\bff,\bfv) \textmd{ for all } \bfv\in V \right\}
\end{equation}
equipped with the norm
\begin{displaymath}
\|\bfu\|_{\mathcal{D}} := \dc\bff\nha,
\end{displaymath}
where $\bfu$ and $\bff$ are corresponding functions via
\ref{space_D}.

Let $\bfu,\,\bfv,\bfw\in W^{1,2}(\Omega)^3$. We will use the
notation
\begin{equation*} b(\bfu,\bfv,\bfw) =
\iom(\bfu\cdot\nabla)\bfv\cdot\bfw\,\dom.
\end{equation*}

Throughout the paper, we will always use positive constants $c$,
$c_1$, $c_2$, $\dots$, which are not specified and which may differ
from line to line.

\section{Formulation of the problem}
 Let $T\in(0,\infty)$, $Q = \omnt$. The
classical formulation of our problem is as follows:
\begin{align}
\bfu_t - \nu\Delta \bfu + (\bfu\cdot\nabla)\bfu + \nabla \mathcal{P}
&= \bff
&&\textmd{ in }\; Q, \label{eq3} \\
{\rm div}\,\bfu &= 0 &&\textmd{  in } \; Q, \label{eq5} \\
\bfu &= {\bf 0}  &&\textmd{  on }\; \Gamma_D\times\nt, \label{eq6} \\
\bfu \cdot \bfn=0, \;\; [\nabla\bfu+(\nabla\bfu)^{\top}]\bfn\cdot
\bftau  &= 0 &&\textmd{ on }\;
\Gamma_G\times\nt,\label{eq6b}\\
-\mathcal{P}\bfn+\nu[\nabla\bfu+(\nabla\bfu)^{\top}]\bfn&= {\bf0}
&&\textmd{  on }\; \Gamma_N\times\nt, \label{eq7} \\
\bfu(0) &= \bfu_0 &&\textmd{  in }\; \om. \label{eq8}
\end{align}
Here $\bff$ is a body force and $\bfu_0$ describes an initial
velocity. We assume that functions $\bfu$, $\mathcal{P}$, $\bff$ and
$\bfu_0$ are smooth enough and the compatibility conditions
$\bfu_0=\bf0$ on $\Gamma_D$ and $\bfu_0 \cdot \bfn=0$ on $\Gamma_G$
hold. For simplicity we suppose that $\nu=1$ throughout the paper.

\medskip

We can formulate our problem:

\medskip

Suppose that $\bff\in L^2(0,T;\; H)$ and $\unu\in
\mathcal{D}${\footnote{The requirement $\unu\in \mathcal{D}$
represents an implicit compatibility condition imposed on the
initial data.}}. Find $\bfu\in L^2(0,T; \; \mathcal{D}) \cap
L^{\infty}(0,T;\; V)$, $\bfu_t\in L^2(0,T;\; H)$ such that
\begin{equation}\label{var_form_1a}
(\bfu_t,\bfv) + ((\bfu,\bfv)) +b(\bfu,\bfu,\bfv) = (\bff,\bfv)
\end{equation}
for every $\bfv\in V$ and for almost every $t\in(0,T)$ and
\begin{equation}\label{var_form_1b}
 \bfu(0) = \unu.
\end{equation}

\bigskip

The main difficulties of problem
\ref{var_form_1a}--\ref{var_form_1b} consist in the fact that,
because of the artificial boundary condition \ref{eq7}, we cannot
prove that $b(\bfu,\bfu,\bfu)=0$. Consequently, we are not able to
show that the kinetic energy of the fluid is controlled by the data
of the problem and the solutions of
\ref{var_form_1a}--\ref{var_form_1b} need not satisfy the energy
inequality. This is due to the fact that some uncontrolled
``backward flow'' can take place at the open parts $\Gamma_N$ of the
domain $\Omega$ and one is not able to prove global (in time)
existence results. In \cite{KraNeu1}--\cite{KraNeu5}, Kra\v cmar and
Neustupa prescribed an additional condition on the output (which
bounds the kinetic energy of the backward flow) and formulated
steady and evolutionary Navier--Stokes problems by means of
appropriate variational inequalities. In \cite{KuceraSkalak1998},
Ku\v cera and Skal\' ak prove the local--in--time existence and
uniqueness of a ``weak'' solution of the non--steady Navier--Stokes
problem with boundary condition \ref{eq7} on the part of the
boundary $\partial\Omega$, such that
\begin{equation}\label{reg_kucera_skalak}
\bfu_t\in L^2(0,T^*; \; V),\quad \bfu_{tt}\in L^2(0,T^*;\; V^*),
\quad  0<T^*\leq T,
\end{equation}
under some smoothness restrictions on $\bfu_0$ and $\mathcal{P}$. In
\cite{Kucera2009}, Ku\v{c}era supposed that the problem is solvable
in suitable function class with some given data (the initial
velocity and the right hand side). The author proved that there
exists a unique solution for data which are small perturbations of
the previous ones.

In \cite{BeKuc2007}, Bene\v{s} and Ku\v{c}era proved local existence
of solutions to the Navier--Stokes system with the so called ``do
nothing'' boundary condition
\begin{displaymath}
-{ \mathcal{P}}{\bfn} + {{\partial {\bfu}}\over{\partial{\bfn}}} =
{\bf0} \quad \textmd{ on }\Gamma_N\times(0,T)
\end{displaymath}
for sufficiently smooth data in a stronger (spatially) sense than
\ref{reg_kucera_skalak} without higher regularity with respect to
time. However, the authors excluded the boundary condition
\ref{eq6b} and proved the local existence solution solely in two
dimensions.

In the present paper, we shall prove local existence and uniqueness
solution to \ref{eq3}--\ref{eq8} such that i.a. $\bfu\in L^2(0,T^*;
\; \mathcal{D})$, $\mathcal{D}\hookrightarrow W^{2,2}(\Omega)^3$,
which is strong in the sense that the solutions possess second
spatial derivatives. The key embedding $\mathcal{D}\hookrightarrow
W^{2,2}(\Omega)^3$ is a consequence of assumptions setting on the
domain $\Omega$ and the regularity theory for the steady Stokes
system in non--smooth domains, see \cite[Corollary 4.2]{OrSan} and
\cite{MazRoss2005, MazRoss2007}.

\bigskip

In next Section \ref{Auxiliary results} we present some auxiliary
results needed in the proof of the main result stated and proved in
Section \ref{sec_main_result}.

\section{Auxiliary results}\label{Auxiliary results}

\begin{theorem}[Linearized problem]\label{linearized_problem}
Let $\bff\in L^2(0,T;\; H)$. Then there exists unique function
$\bfu\in L^2(0,T;\; \mathcal{D}) \cap L^{\infty}(0,T;\; V)$,
$\bfu_t\in L^2(0,T;\; H)$, such that
\begin{equation} \label{lin_var_form_1a}
(\bfu_t,\bfv) + ((\bfu,\bfv)) = (\bff,\bfv)
\end{equation}
holds for every $\bfv\in V$ and for almost every $t\in(0,T)$ and
\begin{equation} \label{lin_var_form_1b}
\bfu(0) = \bfzero.
\end{equation}
Moreover
\begin{equation}\label{eq11a}
\|\bfu_t\|_{L^2(0,T;\;{H})} +\|\bfu\|_{L^2(0,T;\;{D})}
+\|\bfu\|_{L^{\infty}(0,T;\; V)} \leq c \|\bff\|_{L^2(0,T;\;{H})},
\end{equation}
where $c = c(\om)$.
\end{theorem}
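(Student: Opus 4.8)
The plan is to prove Theorem~\ref{linearized_problem} by the Faedo--Galerkin method, which is the standard route for linear evolution equations governed by a coercive symmetric bilinear form. First I would observe that the form $((\cdot,\cdot))$ is, by its definition~\ref{scalar_V}, exactly the inner product of the Hilbert space $V$, so it is bounded and coercive on $V$; Korn's inequality (implicit in the fact that $V$ is a closed subspace of $W^{1,2}(\om)^3$ with this inner product) guarantees that $((\bfv,\bfv)) = \norm{\bfv}_V^2$ controls the full $W^{1,2}$-norm. The triple $V \hookrightarrow H \hookrightarrow V^*$ forms a Gelfand triple with compact, dense embeddings, so I can pick a basis $\set{\bfw_k}$ of $V$ that is orthonormal in $H$ and orthogonal in $V$ (eigenfunctions of the operator associated with $((\cdot,\cdot))$ relative to $(\cdot,\cdot)$); this is where the space $\mathcal{D}$ from~\ref{space_D} enters naturally, since those eigenfunctions lie in $\mathcal{D}$ and the associated operator is precisely the one realizing $\mathcal{D} \hookrightarrow W^{2,2}(\om)^3$.

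Next I would set up the Galerkin approximations $\bfu_N(t) = \sum_{k=1}^N g_k^N(t)\,\bfw_k$ by projecting~\ref{lin_var_form_1a} onto $\mathrm{span}\set{\bfw_1,\dots,\bfw_N}$, with $\bfu_N(0)=\bfzero$ per~\ref{lin_var_form_1b}. This yields a linear system of ODEs with constant coefficients for the $g_k^N$, which has a unique global solution on $(0,T)$ by standard ODE theory. The heart of the argument is the \emph{a priori} estimates, carried out in two tiers. For the first tier, testing with $\bfv=\bfu_N$ and using coercivity gives, after integrating in time and applying Young's inequality to the right-hand side $(\bff,\bfu_N)$, a uniform bound on $\norm{\bfu_N}_{L^\infty(0,T;H)}$ and $\norm{\bfu_N}_{L^2(0,T;V)}$. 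For the stronger estimate~\ref{eq11a}, I would test with $\bfv=\bfu_{N,t}$ (legitimate on the finite-dimensional space): the term $((\bfu_N,\bfu_{N,t}))$ becomes $\tfrac{1}{2}\ddt\norm{\bfu_N}_V^2$, while $(\bff,\bfu_{N,t})$ is controlled by Young's inequality against $\norm{\bfu_{N,t}}_H^2$; integrating from $0$ to $t$ and using $\bfu_N(0)=\bfzero$ produces the uniform bounds on $\norm{\bfu_{N,t}}_{L^2(0,T;H)}$ and $\norm{\bfu_N}_{L^\infty(0,T;V)}$. Finally, reading~\ref{lin_var_form_1a} as $((\bfu_N,\bfv)) = (\bff-\bfu_{N,t},\bfv)$ with $\bff-\bfu_{N,t}\in H$ shows via the definition of $\mathcal{D}$ that $\bfu_N\in\mathcal{D}$ with $\norm{\bfu_N}_{\mathcal{D}} = \norm{\bff-\bfu_{N,t}}_H$, delivering the $L^2(0,T;\mathcal{D})$ bound.

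With these uniform bounds in hand I would extract a subsequence converging weakly in $L^2(0,T;\mathcal{D})$, weakly-$*$ in $L^\infty(0,T;V)$, and with $\bfu_{N,t}$ weakly in $L^2(0,T;H)$, then pass to the limit in the Galerkin equations to obtain a solution of~\ref{lin_var_form_1a}; the estimate~\ref{eq11a} survives in the limit by weak lower semicontinuity of the norms. The initial condition~\ref{lin_var_form_1b} is recovered from the continuity $\bfu\in C([0,T];V)$, which follows from $\bfu\in L^2(0,T;\mathcal{D})$ together with $\bfu_t\in L^2(0,T;H)$ via a standard interpolation (Lions--Magenes) argument. Uniqueness is immediate from linearity: the difference of two solutions satisfies the homogeneous equation with zero data, and testing with the difference itself forces it to vanish.

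The main obstacle I anticipate is not the Galerkin machinery, which is routine, but justifying the key regularity embedding $\mathcal{D}\hookrightarrow W^{2,2}(\om)^3$ that underlies the whole $L^2(0,T;\mathcal{D})$ statement. This is genuinely nontrivial because $\om$ is only polyhedral, so the elliptic regularity for the stationary Stokes-type problem $((\bfu,\bfv))=(\bff,\bfv)$ with the mixed Dirichlet/slip/Neumann conditions~\ref{eq6}--\ref{eq7} depends delicately on the edge-opening angles. This is exactly why the sharp angle restrictions in assumption~(v) of Section~\ref{domain} are imposed; I would invoke the regularity theory cited in the paper (\cite[Corollary 4.2]{OrSan} and \cite{MazRoss2005, MazRoss2007}) to guarantee that a solution of the stationary problem with right-hand side in $H$ actually lies in $W^{2,2}$, and treat this embedding as an established input rather than reproving it.
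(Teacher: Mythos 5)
Your proof is correct and takes essentially the same approach as the paper: the paper's own ``proof'' is a one-line citation of Theorem 2.1 in \cite{BeKuc2007}, which rests on exactly this Faedo--Galerkin scheme (eigenfunction basis, the two-tier energy estimates obtained by testing with $\bfu_N$ and with $\bfu_{N,t}$, weak-compactness limit passage, uniqueness by linearity). One small point to tidy: the Galerkin identity holds only for $\bfv\in\mathrm{span}\{\bfw_1,\dots,\bfw_N\}$, not for all $\bfv\in V$, so the claimed identity $\|\bfu_N\|_{\mathcal{D}}=\|\bff-\bfu_{N,t}\|_{H}$ should be replaced by $\|\bfu_N\|_{\mathcal{D}}=\|P_N(\bff-\bfu_{N,t})\|_{H}\leq\|\bff\|_{H}+\|\bfu_{N,t}\|_{H}$, where $P_N$ is the projection onto that span (simultaneously $H$- and $V$-orthogonal precisely because you chose the eigenfunction basis); alternatively, the $L^2(0,T;\mathcal{D})$ bound follows a posteriori from the limit equation, which does hold for every $\bfv\in V$.
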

\begin{proof}
The proof is essentially the same as the proof of Theorem 2.1 in
\cite{BeKuc2007}.
\end{proof}

 The following result was established by Aubin (see
\cite{Aubin1963}).
\begin{theorem}[Aubin]\label{aubin_compact}
Let $\mathcal{B}_0$, $\mathcal{B}$, $\mathcal{B}_1$ be three Banach
spaces where $\mathcal{B}_0$, $\mathcal{B}_1$ are reflexive. Suppose
that $\mathcal{B}_0$ is continuously imbedded into $\mathcal{B}$,
which is also continuously imbedded into $\mathcal{B}_1$, and
imbedding from $\mathcal{B}_0$ into $\mathcal{B}$ is compact. For
any given $p_0$, $p_1$ with $1<p_0,p_1<\infty$, let
\begin{displaymath}
\mathcal{W}:=\left\{v \; |\; v\in L^{p_0}(0,T;\mathcal{B}_0), \;
v_t\in L^{p_1}(0,T;\mathcal{B}_1)  \right\}.
\end{displaymath}
Then the imbedding from $\mathcal{W}$ into
$L^{p_0}(0,T;\mathcal{B})$ is compact.
\end{theorem}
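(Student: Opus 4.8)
The statement to be proved is the Aubin--Lions compactness theorem, so the plan is to follow the classical route: first isolate the purely functional-analytic ingredient, then run a weak-compactness-plus-interpolation argument, and finally upgrade weak convergence to strong convergence in the Bochner space by a real-variable (Fr\'echet--Kolmogorov) compactness criterion.

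First I would establish Lions' interpolation inequality: for every $\eta>0$ there is $C_\eta>0$ with $\|v\|_{\mathcal{B}}\le \eta\|v\|_{\mathcal{B}_0}+C_\eta\|v\|_{\mathcal{B}_1}$ for all $v\in\mathcal{B}_0$. This is proved by contradiction; if it failed for some $\eta_0$, one would obtain a sequence, normalized so that $\|v_k\|_{\mathcal{B}}=1$, bounded in $\mathcal{B}_0$ and with $\|v_k\|_{\mathcal{B}_1}\to 0$. Compactness of $\mathcal{B}_0\hookrightarrow\mathcal{B}$ yields a subsequence converging in $\mathcal{B}$ to a limit of unit $\mathcal{B}$-norm, while continuity of $\mathcal{B}\hookrightarrow\mathcal{B}_1$ forces that limit to vanish, a contradiction. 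This is the only place the compactness hypothesis is used.

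Next, take a bounded sequence $(v_n)$ in $\mathcal{W}$. Since $\mathcal{B}_0,\mathcal{B}_1$ are reflexive and $1<p_0,p_1<\infty$, the Bochner spaces $L^{p_0}(0,T;\mathcal{B}_0)$ and $L^{p_1}(0,T;\mathcal{B}_1)$ are reflexive, so along a subsequence $v_n\rightharpoonup v$ in $L^{p_0}(0,T;\mathcal{B}_0)$ and $v_n'\rightharpoonup v'$ in $L^{p_1}(0,T;\mathcal{B}_1)$ (the distributional derivative passes to the weak limit), whence $v\in\mathcal{W}$. Replacing $v_n$ by $w_n:=v_n-v$, it suffices to show that a bounded, weakly null sequence in $\mathcal{W}$ converges strongly to $0$ in $L^{p_0}(0,T;\mathcal{B})$. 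Applying Lions' inequality pointwise in $t$ to $w_n(t)$ and integrating the $p_0$-th power over $(0,T)$ gives $\|w_n\|_{L^{p_0}(\mathcal{B})}\le C(\eta\,\|w_n\|_{L^{p_0}(\mathcal{B}_0)}+C_\eta\|w_n\|_{L^{p_0}(\mathcal{B}_1)})$; bounding the first factor by the uniform bound $M$ and choosing $\eta$ small reduces everything to proving $w_n\to 0$ strongly in $L^{p_0}(0,T;\mathcal{B}_1)$.

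The heart of the proof, and the step I expect to be the main obstacle, is this last claim, where the two different exponents and the fact that the time-derivative lives only in the weaker space $\mathcal{B}_1$ must be reconciled with spatial compactness coming only from the stronger space $\mathcal{B}_0$. I would verify the Fr\'echet--Kolmogorov criterion for relative compactness in $L^{p_0}(0,T;\mathcal{B}_1)$. Uniform control of time translates, $\|w_n(\cdot+h)-w_n(\cdot)\|_{L^{p_0}(\mathcal{B}_1)}\le C h^{\theta}\|w_n'\|_{L^{p_1}(\mathcal{B}_1)}\le CM h^{\theta}\to 0$ as $h\downarrow 0$, follows from $w_n(t+h)-w_n(t)=\int_t^{t+h}w_n'(\tau)\,d\tau$ and H\"older's inequality, with an exponent $\theta=\theta(p_0,p_1)>0$. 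For the required pointwise relative compactness, since there is no pointwise-in-$t$ bound in $\mathcal{B}_0$, I would replace $w_n(t)$ by the time average $a^{-1}\int_t^{t+a}w_n$, which is bounded in $\mathcal{B}_0$, hence relatively compact in $\mathcal{B}\hookrightarrow\mathcal{B}_1$, and which differs from $w_n(t)$ by a quantity small in $\mathcal{B}_1$ uniformly in $n$ by the same translate estimate; thus $\{w_n(t)\}_n$ is totally bounded in $\mathcal{B}_1$. The criterion then gives relative compactness in $L^{p_0}(0,T;\mathcal{B}_1)$, and since $w_n\rightharpoonup 0$ the only possible strong limit is $0$. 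Note that the classical Fourier/Plancherel shortcut is unavailable here because $p_0,p_1$ need not equal $2$, which is precisely what forces the real-variable argument above.
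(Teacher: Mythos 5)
Your proof is correct, but there is nothing in the paper to compare it against: the paper does not prove this theorem at all, it simply quotes it as a known result of Aubin with a citation to his 1963 note (this is standard practice for classical compactness lemmas of Aubin--Lions--Simon type). Judged on its own merits, your argument is the classical one and it is sound: the Ehrling-type interpolation inequality $\|v\|_{\mathcal{B}}\le\eta\|v\|_{\mathcal{B}_0}+C_\eta\|v\|_{\mathcal{B}_1}$ proved by contradiction (the only place compactness of $\mathcal{B}_0\hookrightarrow\mathcal{B}$ enters), weak sequential compactness from reflexivity of the Bochner spaces, the reduction via that inequality to strong convergence in $L^{p_0}(0,T;\mathcal{B}_1)$, and the time-averaging device $a^{-1}\int_t^{t+a}w_n$ to trade the missing pointwise $\mathcal{B}_0$-bound for one on averages. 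Two small polish points. First, the combination you verify (pointwise total boundedness of $\{w_n(t)\}_n$ in $\mathcal{B}_1$ plus uniform control of translates) is indeed sufficient, but it is slightly cleaner to observe that your pointwise estimate $\|w_n(t+h)-w_n(t)\|_{\mathcal{B}_1}\le M h^{1-1/p_1}$, which holds uniformly in $n$ and $t$ since $w_n\in W^{1,p_1}(0,T;\mathcal{B}_1)\hookrightarrow C([0,T];\mathcal{B}_1)$, is exactly uniform equicontinuity; together with pointwise relative compactness, the Banach-valued Arzel\`a--Ascoli theorem then gives relative compactness already in $C([0,T];\mathcal{B}_1)$, which is stronger than what the Fr\'echet--Kolmogorov route yields and avoids having to state a vector-valued version of that criterion (whose standard form, due to Simon, is phrased in terms of compactness of the integrals $\int_{t_1}^{t_2}w_n\,dt$ rather than of the values $w_n(t)$). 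Second, two routine details worth a sentence each in a written version: near $t=T$ the forward average must be replaced by a backward one, and the conclusion ``the only possible strong limit is $0$'' should be run through the usual subsequence-of-subsequences argument so that the full sequence, not just a subsequence, converges strongly to $0$.
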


Let us introduce the following reflexive Banach spaces
\begin{displaymath}
X_{T} := \bigl\{\bfphi\;|\; \bfphi \in L^4(0,T;\,W^{11/8,2}(\om)^3)
\cap L^{8}(0,T;\,W^{1,24/11}(\om)^3)\bigr\}
\end{displaymath}
and
\begin{displaymath}
Y_{T} := \left\{\bfpsi\;|\; \bfpsi\in L^2(0,T;\,\mathcal{D}),\;
\bfpsi'\in L^2(0,T;\,H) \right\},
\end{displaymath}
respectively, with norms
\begin{displaymath}
\|\bfphi\|_{X_{T}} := \dc\bfphi\dc_{L^4(0,T;\,W^{11/8,2}(\om)^3)} +
\dc\bfphi\dc_{L^{8}(0,T;\,W^{1,24/11}(\om)^3)}
\end{displaymath}
and
\begin{displaymath}
\|\bfpsi\|_{Y_{T}} := \dc\bfpsi\dc_{L^2(0,T;\,\mathcal{D})} +
\dc\bfpsi'\dc_{L^2(0,T;\,H)}.
\end{displaymath}

\smallskip

Let us present some properties of $X_T$ and $Y_T$. First note that
\cite{MazRoss2005, MazRoss2007}, \cite[Corollary 4.4]{OrSan} and the
requirements imposed on the domain $\Omega$ (see Subsection
\ref{domain}) yield
\begin{equation}\label{emb_100}
\mathcal{D}\hookrightarrow W^{2,2}(\Omega)^3,
\end{equation}
which implies \cite{LiMa}
\begin{equation}\label{emb_130}
Y_{T} \hookrightarrow L^{\infty}(0,T;W^{1,2}(\om)^3).
\end{equation}
Let $\bfphi \in Y_{T}$. Raising and integrating the interpolation
inequality
\begin{equation*}
\| \bfphi(t) \|_{W^{3/2,2}(\om)^3} \leq c\| \bfphi(t)
\|^{1/2}_{W^{1,2}(\om)^3} \|\bfphi(t)\|^{1/2}_{W^{2,2}(\om)^3}
\end{equation*}
from $0$ to $T$ we get
\begin{eqnarray}\label{est_121}
 \left( \int^T_0 \| \bfphi(t) \|^4_{W^{3/2,2}(\om)^3}
 {\rm d}t \right)^{1/4}  &\leq& c\left( \int^T_0 \| \bfphi(t)
\|^2_{W^{2,2}(\om)^3}\|\bfphi(t)\|^2_{W^{1,2}(\om)^3} {\rm d}t
\right)^{1/4}
\nonumber\\
&\leq& c\| \bfphi \|^{1/2}_{{L}^{2}(0,T;W^{2,2}(\om)^3)} \| \bfphi
\|^{1/2}_{L^{\infty}(0,T;W^{1,2}(\om)^3)}
\nonumber\\
&\leq& c \, \| \bfphi \|_{Y_{T}},
\end{eqnarray}
where $c=c(\Omega)$. Hence we have
\begin{equation}\label{emb_160}
Y_{T} \hookrightarrow L^{4}(0,T;\,W^{3/2,2}(\om)^3).
\end{equation}
Using embeddings
\begin{equation*}
W^{3/2,2}(\om)^3\hookrightarrow\hookrightarrow
W^{11/8,2}(\om)^3\hookrightarrow L^2(\om)^3
\end{equation*}
and
\begin{equation*}
W^{11/8,2}(\om)^3\hookrightarrow W^{1,8/3}(\om)^3\hookrightarrow
L^{24}(\om)^3
\end{equation*}
and applying Theorem \ref{aubin_compact} we get the compact
embedding
\begin{equation}\label{comp_emb_14}
Y_{T}  \hookrightarrow \hookrightarrow L^4(0,T;\,W^{11/8,2}(\om)^3)
\hookrightarrow L^4(0,T;\,L^{24}(\om)^3).
\end{equation}
Further, raising and integrating the interpolation inequality (cf.
\cite[Theorem 5.2]{AdamsFournier1992})
\begin{equation*}
\| \bfphi(t) \|_{{W^{5/4,2}}(\om)^3} \leq c\|
\bfphi(t)\|^{1/4}_{{W^{2,2}}(\om)^3} \| \bfphi(t)
\|^{3/4}_{{W^{1,2}}(\om)^3}
\end{equation*}
from $0$ to $T$ we get
\begin{eqnarray}\label{est_101}
 \left( \int^T_0 \| \bfphi(t) \|^8_{W^{5/4,2}(\om)^3}
 {\rm d}t \right)^{1/8}  &\leq& c\left( \int^T_0 \| \bfphi(t)
\|^2_{{W^{2,2}}(\om)^3}\|\bfphi(t)\|^6_{{W^{1,2}}(\om)^3} {\rm d}t
\right)^{1/8}
\nonumber\\
&\leq& c\| \bfphi \|^{1/4}_{{L}^{2}(0,T;W^{2,2}(\om)^3)} \| \bfphi
\|^{3/4}_{L^{\infty}(0,T;W^{1,2}(\om)^3)}
\nonumber\\
&\leq& c \, \| \bfphi \|_{Y_{T}},
\end{eqnarray}
where $c=c(\Omega)$. Hence
\begin{equation}\label{emb_150}
Y_{T} \hookrightarrow L^{8}(0,T;\,W^{5/4,2}(\om)^3).
\end{equation}
Note that
\begin{equation}\label{emb_151}
W^{5/4,2}(\om)^3\hookrightarrow\hookrightarrow W^{9/8,2}(\om)^3
\hookrightarrow  W^{1,24/11}(\om)^3.
\end{equation}
Now \ref{emb_150} and \ref{emb_151} and Theorem \ref{aubin_compact}
yield the compact embedding
\begin{equation}\label{comp_emb_15}
Y_{T} \hookrightarrow \hookrightarrow L^8(0,T;\,W^{1,24/11}(\om)^3).
\end{equation}
Finally, \ref{comp_emb_14} and \ref{comp_emb_15} imply the compact
embedding
\begin{equation}\label{comp_emb_16}
Y_{T}\hookrightarrow\hookrightarrow X_{T}.
\end{equation}

\bigskip

\section{Main result}\label{sec_main_result}

\subsection{Statement of the result}
The main result of the paper is the following
\begin{theorem}[Main result]\label{main_result}
There exists $\ts \in (0,T]$ and the uniquely determined function
$\bfu\in L^2(0,\ts;\, \mathcal{D}) \cap L^{\infty}(0,T^*;\; V)$,
$\bfu_t\in L^2(0,\ts;\; H)$, such that $\bfu$ satisfies
\ref{var_form_1a}--\ref{var_form_1b} for every $\bfv\in{V}$ and for
almost every $t\in(0,\ts)$, where $\bff\in L^2(0,\ts;\;{H})$ and
$\unu\in\mathcal{D}$.
\end{theorem}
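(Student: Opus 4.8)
The plan is to recast \ref{var_form_1a}--\ref{var_form_1b} as a fixed-point problem built on the linear solution operator of Theorem~\ref{linearized_problem} and to exploit the compact embedding \ref{comp_emb_16}. First I would remove the inhomogeneous datum: since $\unu\in\mathcal{D}$, the form $\bfv\mapsto((\unu,\bfv))$ is represented by an element of $\haa$, so the time-independent field $\unu$ solves the linear equation with an $\ldsh$ right-hand side, and writing $\bfu=\unu+\bfz$ reduces the problem to one with zero initial value in which the extra terms $b(\unu,\bfz,\cdot)$, $b(\bfz,\unu,\cdot)$, $b(\unu,\unu,\cdot)$ and $((\unu,\cdot))$ are absorbed into the data. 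Each of these lies in $\ldsh$ (here one uses $\mathcal{D}\hookrightarrow W^{2,2}(\om)^3\hookrightarrow L^\infty(\om)^3$, cf. \ref{emb_100}) and is at most linear in $\bfz$, so it does not alter the structure of the argument; I therefore describe the scheme for homogeneous initial condition and forcing $\bff$.

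Second, I would define a map $\mathcal{F}$ on a ball of $X_{\ts}$: given $\bfw$, let $\mathcal{F}(\bfw)=\bfu$ be the solution furnished by Theorem~\ref{linearized_problem} of the linear problem with right-hand side $\bff-P[(\bfw\cdot\nabla)\bfw]$, where $P$ is the orthogonal projection of $L^2(\om)^3$ onto $\haa$. The decisive estimate is that the convective term maps $X_{\ts}$ into $\ldsh$ with a small-time factor. Indeed, by the spatial embeddings $W^{11/8,2}(\om)^3\hookrightarrow L^{24}(\om)^3$ and $W^{1,24/11}(\om)^3$ together with H\"older's inequality in space ($\tfrac1{24}+\tfrac{11}{24}=\tfrac12$) and in time ($\tfrac14+\tfrac18=\tfrac38$) one obtains $\|(\bfw\cdot\nabla)\bfw\|_{\ldsh}\le c\,(\ts)^{1/8}\|\bfw\|^2_{X_{\ts}}$. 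Combining this with \ref{eq11a} and the (uniform in $\ts$) continuous embedding $Y_{\ts}\hookrightarrow X_{\ts}$ from \ref{emb_160} and \ref{emb_150} gives $\|\mathcal{F}(\bfw)\|_{X_{\ts}}\le c(\|\bff\|_{\ldsh}+(\ts)^{1/8}\|\bfw\|^2_{X_{\ts}})$, so for a fixed radius $R\sim\|\bff\|_{L^2(0,T;\haa)}$ and $\ts$ small enough $\mathcal{F}$ maps the closed ball $B_R\subset X_{\ts}$ into itself.

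Third, I would establish continuity and compactness and invoke a fixed-point theorem. The same bilinear bookkeeping applied to $(\bfw_1\cdot\nabla)\bfw_1-(\bfw_2\cdot\nabla)\bfw_2=(\bfw_1\cdot\nabla)(\bfw_1-\bfw_2)+((\bfw_1-\bfw_2)\cdot\nabla)\bfw_2$ yields $\|\mathcal{F}(\bfw_1)-\mathcal{F}(\bfw_2)\|_{X_{\ts}}\le c\,(\ts)^{1/8}R\,\|\bfw_1-\bfw_2\|_{X_{\ts}}$ on $B_R$; hence $\mathcal{F}$ is continuous, and since $\mathcal{F}(B_R)$ is bounded in $Y_{\ts}$, the compact embedding \ref{comp_emb_16} makes $\mathcal{F}(B_R)$ relatively compact in $X_{\ts}$. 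Schauder's theorem then produces a fixed point $\bfu=\mathcal{F}(\bfu)\in B_R\cap Y_{\ts}$, which by construction solves \ref{var_form_1a}, and the regularity $\bfu\in L^2(0,\ts;\mathcal{D})\cap L^\infty(0,\ts;V)$ with $\bfu_t\in\ldsh$ is inherited from $Y_{\ts}$ and Theorem~\ref{linearized_problem}. (Shrinking $\ts$ further makes the Lipschitz constant above strictly less than one, so the same estimate alternatively yields the fixed point by contraction together with its uniqueness inside $B_R$.)

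Finally, uniqueness in the full solution class would follow from a Gronwall argument: for two solutions the difference $\bfz=\bfu_1-\bfu_2$ satisfies $(\bfz_t,\bfv)+((\bfz,\bfv))=-b(\bfz,\bfu_1,\bfv)-b(\bfu_2,\bfz,\bfv)$, and testing with $\bfv=\bfz$ while using $\bfu_i\in L^2(0,\ts;\mathcal{D})\cap L^\infty(0,\ts;V)$ to control the trilinear terms leads to $\tfrac{d}{dt}\|\bfz\|^2_{\haa}\le g(t)\|\bfz\|^2_{\haa}$ with $g\in L^1(0,\ts)$, whence $\bfz\equiv\bfzero$. The step I expect to be most delicate is precisely this energy estimate, because $b(\bfu_2,\bfz,\bfz)$ does not vanish: integration by parts leaves the boundary contribution $\tfrac12\int_{\Gamma_N}(\bfu_2\cdot\bfn)|\bfz|^2$ coming from the uncontrolled outflow, which must be dominated by trace and interpolation inequalities exploiting the $W^{2,2}$-regularity of $\bfu_2$ before Gronwall applies. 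This same lack of coercivity of the convective term is what forces the small-time restriction and dictates the careful choice of the spaces $X_{\ts}$ and $Y_{\ts}$ in the existence part.
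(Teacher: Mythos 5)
Your existence argument is essentially the paper's own: the reduction $\bfu=\unu+\bfw$ to a homogeneous initial condition (Remark \ref{subst_hom_problem}), a fixed-point map built from the linearized solver of Theorem \ref{linearized_problem} (Definition \ref{def_F} and the operator $\bfmathcalA$), your bound $\|(\bfw\cdot\nabla)\bfw\|_{L^2(0,T;\,H)}\leq c\,T^{1/8}\|\bfw\|^2_{X_T}$ is exactly \ref{eq40}, your $T^{1/4}$ bounds on the terms linear in $\bfw$ are \ref{eq38}--\ref{eq39}, and the invariance of $B_R(T^*)$ for small $T^*$ combined with the compactness \ref{comp_emb_16} and a fixed-point theorem is precisely how the paper concludes (the paper invokes ``Brouwer'', but what is actually used is the Schauder theorem you name; your contraction remark is a legitimate variant not taken in the paper).

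The one substantive divergence is in uniqueness, and there your diagnosis of the difficulty is mistaken, though harmlessly so. You assume that because $b(\bfu_2,\bfz,\bfz)$ does not vanish, one must integrate by parts and then control the outflow trace term $\tfrac12\int_{\Gamma_N}(\bfu_2\cdot\bfn)|\bfz|^2$. The paper never integrates by parts: it estimates both trilinear terms directly by H\"older, $|b(\bfu_1,\bfz,\bfz)|\leq\|\bfu_1\|_{L^4(\om)^3}\|\nabla\bfz\|_{L^2(\om)^3}\|\bfz\|_{L^4(\om)^3}$ and $|b(\bfz,\bfu_2,\bfz)|\leq\|\bfz\|^2_{L^4(\om)^3}\|\nabla\bfu_2\|_{L^2(\om)^3}$, then applies the interpolation $\|\bfz\|_{L^4(\om)^3}\leq c\|\bfz\|^{3/4}_{V}\|\bfz\|^{1/4}_{L^2(\om)^3}$ and Young's inequality to obtain the Gronwall weight $\theta(t)=2c_\delta\bigl(\|\bfu_1(t)\|^8_{L^4(\om)^3}+\|\bfu_2(t)\|^4_{W^{1,2}(\om)^3}\bigr)$, which lies in $L^1(0,\ts)$ already because $\bfu_1,\bfu_2\in L^\infty(0,\ts;\,V)$; no traces, no $W^{2,2}$ regularity, and no boundary analysis enter at all. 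The non-vanishing of $b(\cdot,\bfz,\bfz)$ only means the term must be kept and estimated, not that it must be converted into a boundary integral; the loss of the identity $b(\bfu,\bfu,\bfu)=0$ is what destroys the global energy bound (hence the local-in-time statement), but it is irrelevant to the uniqueness computation. Your trace-based variant could indeed be completed using $\bfu_2\in L^2(0,\ts;\,\mathcal{D})$ and the embedding \ref{emb_100}, but it is extra machinery for a step that the direct estimate settles in two lines.
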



\subsection{Proof of the main result}

\subsubsection{Existence}

\begin{remark}\label{subst_hom_problem}
Setting $\bfw=\bfu  - \unu$ this amounts to solving the problem with
the homogeneous initial condition
\begin{eqnarray}
(\bfw_t,\bfv) + ((\unu+\bfw,\bfv))
+b(\unu+\bfw,\unu+\bfw,\bfv) &=& (\bff,\bfv) \label{var_form_2a}\\
\bfw(0) &=& \bfzero \label{var_form_2b}
\end{eqnarray}
for every $\bfv\in V$ and for almost every $t\in(0,T)$, where
$\bff\in L^2(0,T;\; H)$ and $\unu\in \mathcal{D}$.
\end{remark}

Denote by $B_R(T) \subset X_{T}$ the closed ball
\begin{equation}
B_R(T)  := \{\bfvarphi\in X_{T};\, \|\bfvarphi\|_{{X}_{T}}\leq R\}.
\end{equation}
For arbitrary fixed $\widetilde{\bfw}\in{X}_{T}$  we now consider
the linear problem
\begin{eqnarray}
(\bfw',\bfv) + ((\bfw,\bfv))  &=& (\bff,\bfv) - ((\unu,\bfv)) -
b(\unu,\unu,\bfv)-b(\unu,\widetilde{\bfw},\bfv)  \label{var_form_5a}\\
&&  -b(\widetilde{\bfw},\unu,\bfv)
-b(\widetilde{\bfw},\widetilde{\bfw},\bfv)
   \nonumber  \\
\bfw(0) &=& \bfzero \label{var_form_5b}
\end{eqnarray}
for every $\bfv\in {V}$ and for almost every $t\in(0,T)$.

\begin{definition}\label{def_F}
Let $\bfmathcalF:{X}_{T} \rightarrow L^2(0,T;H)$ be an operator such
that
\begin{eqnarray}\label{eq35}
(\bfmathcalF(\bfphi),\bfv)  &=& (\bff,\bfv) - ((\unu,\bfv)) -
b(\unu,\unu,\bfv) \\
&& -b(\unu,\bfphi,\bfv) -b(\bfphi,\unu,\bfv) -b(\bfphi,\bfphi,\bfv)
\nonumber
\end{eqnarray}
for every $\bfv\in {V}$ and for almost every $t\in(0,T)$.
\end{definition}

From Theorem \ref{linearized_problem} we deduce that for arbitrary
fixed $\widetilde{\bfw}\in{X}_{T}$ there exists $\bfw\in
L^2(0,T;\;\mathcal{D}) \cap L^{\infty}(0,T;\;V)$, $\bfw'\in
L^2(0,T;H)$, $\bfw(0)=\bfzero$ and
\begin{equation}
\|\bfw\|_{{X}_{T}}\leq c
\|\bfmathcalF(\widetilde{\bfw})\|_{L^2(0,T;{H})}.
\end{equation}
We now prove the following
\begin{lemma}\label{sup_lem_10}
$\bfmathcalF$ is a continuous operator from ${X}_{T}$ into
$L^2(0,T;\;{H})$ and for all $R>0$, $T>0$, and for all
$\widetilde{\bfw}\in B_R(T)$ we have
\begin{equation}\label{est_F}
\|\bfmathcalF(\widetilde{\bfw})\|_{L^2(0,T;\;{H})}\leq C_0(T) + C_1
(T^{1/8}R^2+T^{1/4}R),
\end{equation}
where $C_0(T)\rightarrow 0_+$ for $T\rightarrow 0_+$ and $C_1$ is
independent of $T$.
\end{lemma}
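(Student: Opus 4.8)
The plan is to use that, by Definition \ref{def_F}, $\bfmathcalF(\bfphi)(t)$ is nothing but the Riesz representative in $H$ of the functional appearing on the right-hand side of \ref{eq35}. Since $E\subset V\subset H$ with $E$ dense in $H$, the space $V$ is dense in $H$, so the $H$-norm can be computed by duality,
\begin{equation*}
\|\bfmathcalF(\bfphi)(t)\|_{H}=\sup_{\bfv\in V,\;\|\bfv\|_{L^2(\Omega)^3}\le1}\big(\bfmathcalF(\bfphi)(t),\bfv\big).
\end{equation*}
Thus it suffices to bound each of the six terms in \ref{eq35} by $\|\bfv\|_{L^2(\Omega)^3}$ times a time-dependent coefficient; these same bounds simultaneously show that the representative really lies in $H$, so that $\bfmathcalF$ maps into $L^2(0,T;H)$. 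I would split $\bfmathcalF$ into the $\bfphi$-independent part $(\bff,\cdot)-((\unu,\cdot))-b(\unu,\unu,\cdot)$, the two terms linear in $\bfphi$, and the quadratic term $b(\bfphi,\bfphi,\cdot)$, and treat the three groups separately.

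The heart of the matter is to control the convective terms exactly through the two norms defining $X_{T}$. Applying H\"older in $\Omega$ with exponents closing at $1/p+1/q=1/2$ and the embeddings recorded before \ref{comp_emb_14}, namely $W^{11/8,2}(\Omega)^3\hookrightarrow L^{24}(\Omega)^3$ and $W^{1,24/11}(\Omega)^3$ furnishing $\nabla\bfphi\in L^{24/11}(\Omega)$, one obtains the pointwise-in-time estimates
\begin{align*}
|b(\bfphi,\bfphi,\bfv)| &\le c\,\|\bfphi\|_{L^{24}(\Omega)^3}\|\nabla\bfphi\|_{L^{24/11}(\Omega)}\|\bfv\|_{L^2(\Omega)^3},\\
|b(\unu,\bfphi,\bfv)| &\le c\,\|\unu\|_{L^{8}(\Omega)^3}\|\nabla\bfphi\|_{L^{8/3}(\Omega)}\|\bfv\|_{L^2(\Omega)^3},\\
|b(\bfphi,\unu,\bfv)| &\le c\,\|\bfphi\|_{L^{24}(\Omega)^3}\|\nabla\unu\|_{L^{24/11}(\Omega)}\|\bfv\|_{L^2(\Omega)^3}.
\end{align*}
Here $\|\nabla\bfphi\|_{L^{8/3}(\Omega)}$ is controlled by $\|\bfphi\|_{W^{11/8,2}(\Omega)^3}$ via $\nabla\bfphi\in W^{3/8,2}(\Omega)\hookrightarrow L^{8/3}(\Omega)$, and $\unu\in\mathcal{D}\hookrightarrow W^{2,2}(\Omega)^3$ (see \ref{emb_100}) guarantees $\unu\in L^{8}(\Omega)^3$ and $\nabla\unu\in L^{24/11}(\Omega)$, so the $\unu$-coefficients are finite constants; the $\bfphi$-independent term $b(\unu,\unu,\cdot)$ is bounded analogously by a constant.

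Taking the supremum over $\bfv$ gives pointwise bounds on $\|\bfmathcalF(\bfphi)(t)\|_{H}$, which I then square, integrate over $(0,T)$, and estimate by H\"older in time using $\|\bfphi\|_{L^{24}(\Omega)^3}\le c\|\bfphi\|_{W^{11/8,2}(\Omega)^3}\in L^4(0,T)$ and $\|\nabla\bfphi\|_{L^{24/11}(\Omega)}\le c\|\bfphi\|_{W^{1,24/11}(\Omega)^3}\in L^8(0,T)$. For the quadratic term the product of an $L^4(0,T)$ and an $L^8(0,T)$ factor leaves a spare H\"older factor $\|1\|_{L^4(0,T)}=T^{1/4}$, yielding $\big(\int_0^T\cdots\,{\rm d}t\big)^{1/2}\le cT^{1/8}\|\bfphi\|_{X_{T}}^2\le cT^{1/8}R^2$; for each linear term the single $L^4(0,T)$ factor leaves the spare factor $T^{1/4}$, giving $\le cT^{1/4}R$. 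The three $\bfphi$-independent terms are constant (or merely $L^2$) in time: $\|\bff\|_{L^2(0,T;H)}\to0$ by absolute continuity of the integral as $T\to0_+$, while the $\unu$-terms carry an explicit factor $T^{1/2}$; collecting these into $C_0(T)$ shows $C_0(T)\to0_+$ and establishes \ref{est_F} with $C_1$ independent of $T$.

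Finally, continuity of $\bfmathcalF$ on $X_{T}$ follows from the same estimates combined with the algebraic identity
\begin{equation*}
b(\bfphi_1,\bfphi_1,\bfv)-b(\bfphi_2,\bfphi_2,\bfv)=b(\bfphi_1-\bfphi_2,\bfphi_1,\bfv)+b(\bfphi_2,\bfphi_1-\bfphi_2,\bfv)
\end{equation*}
together with the linearity of the remaining $\bfphi$-terms, which together render $\bfmathcalF$ Lipschitz on bounded subsets of $X_{T}$. The main obstacle is purely technical bookkeeping: verifying that the Sobolev exponents and the two H\"older balances — the spatial one $1/p+1/q=1/2$ and the temporal split into $L^4(0,T)$ and $L^8(0,T)$ with the correct spare power — match precisely the two norms defining $X_{T}$, so that the powers $T^{1/8}R^2$ and $T^{1/4}R$ emerge with constants independent of $T$.
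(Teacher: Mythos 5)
Your proof is correct and follows essentially the same route as the paper: the same splitting of $\bfmathcalF$ into data, linear, and quadratic parts, the identical spatial H\"older pairing $L^{24}\times W^{1,24/11}$ for the quadratic term (exactly matching the two norms defining $X_T$) to produce $T^{1/8}R^2$, the $T^{1/4}R$ bounds for the terms involving $\unu$, and the same bilinear decomposition to get continuity. The only immaterial difference is your choice of spatial exponents for the linear terms --- the paper pairs $W^{1,4}\times L^4$ and $L^{\infty}\times W^{1,2}$ where you pair $L^{8}\times L^{8/3}$ and $L^{24}\times L^{24/11}$ --- which changes nothing in the outcome.
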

\begin{proof}
Obviously, there exists $C_0(T)>0$ such that
\begin{equation}\label{eq37a}
\|(\bff,\cdot) - ((\unu,\cdot)) -
b(\unu,\unu,\cdot)\|_{L^2(0,T;{H})}\leq C_0(T),
\end{equation}
$C_0(T)\rightarrow 0_+$ for $T\rightarrow 0_+$.

Using the interpolation inequality one obtains
\begin{eqnarray}\label{eq38}
\| b(\widetilde{\bfw},\unu,\cdot)\|_{L^2(0,T;{H})}&\leq&
\left(\int_0^T\|\unu\|^2_{W^{1,4}(\om)^3}\|\widetilde{\bfw}\|_{L^4(\om)^3}^2
{\rm d}t\right)^{1/2}\\
\nonumber &\leq&
\|\unu\|_{W^{1,4}(\om)^3}\|\widetilde{\bfw}\|_{L^2(0,T;L^{4}(\om)^3)}\\
\nonumber &\leq& T^{1/4} \|\unu\|_{W^{1,4}(\om)^3}
\|\widetilde{\bfw}\|_{L^{4}(0,T;L^{4}(\om)^3)}\\
\nonumber &\leq& c \,
T^{1/4}\|\unu\|_{\mathcal{D}}\|\widetilde{\bfw}\|_{{X}_{T}}.
\end{eqnarray}
Similarly, we obtain the inequalities
\begin{eqnarray}\label{eq39}
\| b(\unu,\widetilde{\bfw},\cdot)\|_{L^2(0,T;{H})}&\leq&
\left(\int_0^T\|\unu\|^2_{L^{\infty}(\Omega)^3}\|\widetilde{\bfw}\|_{W^{1,2}(\om)^3}^2
{\rm d}t\right)^{1/2}\\
\nonumber &\leq&
\|\unu\|_{L^{\infty}(\Omega)^3}\|\widetilde{\bfw}\|_{L^2(0,T;W^{1,2}(\om)^3)}\\
\nonumber &\leq& T^{1/4} \|\unu\|_{L^{\infty}(\Omega)^3}
\|\widetilde{\bfw}\|_{L^{4}(0,T;W^{1,2}(\om)^3)}\\
\nonumber &\leq& c \,
T^{1/4}\|\unu\|_{\mathcal{D}}\|\widetilde{\bfw}\|_{{X}_{T}}
\end{eqnarray}
and
\begin{eqnarray}\label{eq40}
\quad \|b(\widetilde{\bfw},\widetilde{\bfw},\cdot)\|_{L^2(0,T;{H})}
&\leq& \left(\int_0^T \|\widetilde{\bfw}\|^2_{L^{24}(\om)^3}
\|\widetilde{\bfw}\|^2_{W^{1,24/11}(\om)^3}{\rm d}t\right)^{1/2} \\
\nonumber &\leq& T^{1/8}
\|\widetilde{\bfw}\|_{L^4(0,T;L^{24}(\om)^3)}
\|\widetilde{\bfw}\|_{L^8(0,T;W^{1,24/11}(\om)^3)}\\
\nonumber &\leq& c \, T^{1/8}\|\widetilde{\bfw}\|^2_{{X}_{T}}.
\end{eqnarray}
The inequalities \ref{eq37a}--\ref{eq40} yield
$\bfmathcalF(\widetilde{\bfw})\in\ldh$ and the inequality
\ref{est_F} holds.

Let $\widetilde{\bfw}_1,\widetilde{\bfw}_2\in X_{T}$ and ${\bfz}=
\widetilde{\bfw}_2 - \widetilde{\bfw}_1$. Then
\begin{multline}
\|\bfmathcalF(\widetilde{\bfw}_2)-\bfmathcalF(\widetilde{\bfw}_1)\|_{L^2(0,T;{H})}
\leq \|\bfb(\bfz,\unu,\cdot)\|_{L^2(0,T;{H})}
\\
+ \|\bfb(\unu,\bfz,\cdot)\|_{L^2(0,T;{H})} +
\|\bfb(\widetilde{\bfw}_2,\bfz,\cdot)\|_{L^2(0,T;{H})} +
\|\bfb(\bfz,\widetilde{\bfw}_1,\cdot)\|_{L^2(0,T;{H})}
\end{multline}
and
\begin{eqnarray}\label{ineq41}
\|b(\widetilde{\bfw}_2,\bfz,\cdot)\|_{L^2(0,T;{H})} &\leq&
\left(\int_0^T \|\widetilde{\bfw}_2\|^2_{L^{24}(\om)^3}
\|\bfz\|^2_{W^{1,24/11}(\om)^3}{\rm d}t\right)^{1/2}
\\ \nonumber
&\leq& \|\widetilde{\bfw}_2\|_{L^4(0,T;L^{24}(\om)^3)}
\|\bfz\|_{L^4(0,T;W^{1,24/11}(\om)^3)}
\\ \nonumber
&\leq& c \, \|\widetilde{\bfw}_2\|_{{X}_{T}}\|\bfz\|_{{X}_{T}}.
\end{eqnarray}
Similarly
\begin{eqnarray}\label{ineq42}
\| b(\bfz,\widetilde{\bfw}_1,\cdot)\|_{L^2(0,T;{H})}
&\leq&\left(\int_0^T \|\bfz\|^2_{L^{24}(\om)^3}
\|\widetilde{\bfw}_1\|^2_{W^{1,24/11}(\om)^3}{\rm d}t\right)^{1/2}
\\ \nonumber
&\leq&  c \, \|\bfz\|_{{X}_{T}}\|\widetilde{\bfw}_1\|_{{X}_{T}}.
\end{eqnarray}

Inequalities \ref{ineq41}--\ref{ineq42} and \ref{eq38}--\ref{eq39}
imply that $\bfmathcalF$ is a continuous operator from ${X}_{T}$
into $L^2(0,T;{H})$.
\end{proof}

The proof of the main result is based on the Brouwer fixed point
theorem. Let the operator $\bfmathcalA$ be defined as follows. Given
a function $\widetilde{\bfw} \in X_T$, consider the linear problem
\begin{eqnarray}
(\bfw',\bfv) + ((\bfw,\bfv))  &=&
(\bfmathcalF(\widetilde{\bfw}),\bfv)
\label{var_form_200a}\\
\bfw(0) &=& \bfzero \label{var_form_200b}
\end{eqnarray}
for every $\bfv \in V$ and for almost every $t\in(0,T)$, where
$\bfmathcalF$ is defined by Definition \ref{def_F}. Theorem
\ref{linearized_problem} and Lemma \ref{sup_lem_10} ensure that the
linear problem \ref{var_form_200a}--\ref{var_form_200b} has a unique
solution $\bfw \in Y_T$. Define $\bfmathcalA:X_T\rightarrow Y_T$ by
setting $\bfmathcalA(\widetilde{\bfw})=\bfw$. Clearly, the
inequality \ref{eq11a} and Lemma \ref{sup_lem_10} imply that
$\bfmathcalA$ is a continuous operator from $X_T$ into $Y_T$. For
all $\widetilde{\bfw}\in B_R(T)$, taking \ref{eq11a} and \ref{est_F}
together, we deduce
\begin{equation*}
\|\bfmathcalA(\widetilde{\bfw})\|_{X_T}\leq c_1
\|\bfmathcalA(\widetilde{\bfw}) \|_{Y_T} \leq c_2
\|\bfmathcalF(\widetilde{\bfw})\|_{\ldsh}  \leq c_3(T) + c_4
(T^{1/8}R^2+T^{1/4}R),
\end{equation*}
where $c_3(T)\rightarrow 0$ for $T\rightarrow 0$ and $c_1,c_2$ and
$c_4$ do not depend on $T$. Hence for $T=T^*$, $T^*>0$ sufficiently
small, and for a sufficiently large $R$, $\bfmathcalA$ maps
$B_R(T^*)$ into itself. Since $\bfmathcalA$ is a continuous operator
from $X_T$ into $Y_T$ and $Y_T\hookrightarrow \hookrightarrow X_T$,
$\bfmathcalA$ is totally continuous operator from $X_{T^*}$ into
$X_{T^*}$, where $X_{T^*}$ is a reflexive Banach space. Therefore
there exists a fixed point $\bfw\in B_R(T^*)$ such that
$\bfmathcalA(\bfw) = \bfw$ in $X_{T^*}$.

\subsubsection{Uniqueness}
Suppose that there are two solutions $\bfu_1,\bfu_2\in Y_{T^*}$ of
\ref{var_form_1a}--\ref{var_form_1b} on $(0,T^*)$. Denote
$\bfz=\bfu_1-\bfu_2$ then
\begin{equation}
(\bfz_t,\bfv) + ((\bfz,\bfv))
+b(\bfz,\bfu_2,\bfv)+b(\bfu_1,\bfz,\bfv)=0
\end{equation}
holds for all $\bfv \in V$ and almost every $t \in (0,T)$ and
$\bfz(0)={\bf0}$. Hence
\begin{eqnarray*}
\frac{1}{2}\frac{{\rm d}}{{\rm d} t}\|\bfz(t)\|^2_{H} +
\|\bfz(t)\|^2_{V} &\leq&
|b(\bfu_1(t),\bfz(t),\bfz(t))|+|b(\bfz(t),\bfu_2(t),\bfz(t))|  \\
&\leq&
\|\bfu_1(t)\|_{L^4(\Omega)^3}\|\nabla\bfz(t)\|_{L^2(\Omega)^3}
\|\bfz(t)\|_{L^4(\Omega)^3}\\
&&+\|\bfz(t)\|^2_{L^4(\Omega)^3}
\|\nabla\bfu_2(t)\|_{L^2(\Omega)^3}.
\end{eqnarray*}
Using the interpolation inequality
\begin{equation*}
\|\bfz(t)\|_{L^4(\Omega)^3} \leq c\, \|\bfz(t)\|^{3/4}_{V}
\|\bfz(t)\|^{1/4}_{L^2(\Omega)^3}
\end{equation*}
we get
\begin{multline}
\frac{1}{2}\frac{{\rm d}}{{\rm d} t}\|\bfz(t)\|^2_{H} +
\|\bfz(t)\|^2_{V}  \leq
c_1\|\bfu_1(t)\|_{L^4(\Omega)^3}\|\bfz(t)\|^{7/4}_{V}
\|\bfz(t)\|^{1/4}_{L^2(\Omega)^3} \\
+c_2\|\bfz(t)\|^{3/2}_{V}
\|\bfz(t)\|^{1/2}_{L^2(\Omega)^3}\|\bfu_2(t)\|_{W^{1,2}(\Omega)^3}.
\end{multline}
Using Young's inequality we deduce
\begin{multline}
\frac{1}{2}\frac{{\rm d}}{{\rm d} t}\|\bfz(t)\|^2_{H} +
\|\bfz(t)\|^2_{V}  \leq \delta\|\bfz(t)\|^{2}_{V}
\\
+c_{\delta}\|\bfz(t)\|^{2}_{L^2(\Omega)^3} \left(
\|\bfu_1(t)\|^8_{L^4(\Omega)^3} +
\|\bfu_2(t)\|^4_{W^{1,2}(\Omega)^3} \right),
\end{multline}
where $\delta>0$ can be chosen arbitrarily small and therefore
\begin{equation}
\frac{{\rm d}}{{\rm d} t}\|\bfz(t)\|^2_{H} \leq 2c_{\delta} \; \|
\bfz(t) \|^2_{H} \left( \|\bfu_1(t)\|^8_{L^4(\Omega)^3} +
\|\bfu_2(t)\|^4_{W^{1,2}(\Omega)^3} \right).
\end{equation}
Hence, we have the differential inequality
\begin{displaymath}
y'(t) \leq \theta(t)y(t),
\end{displaymath}
where
\begin{displaymath}
y(t) = \|\bfz(t)\|^2_{H} \quad  \textmd{ and } \quad \theta(t)=
2c_{\delta}  \left( \|\bfu_1(t)\|^8_{L^4(\Omega)^3} +
\|\bfu_2(t)\|^4_{W^{1,2}(\Omega)^3} \right) \in L^1((0,T)),
\end{displaymath}
from which we obtain, using the technique of Gronwall's lemma,
\begin{displaymath}
\frac{{\rm d}}{{\rm d} t}\left( y(t) \exp{\left(-\int_0^t\theta(s)
\;{\rm d}s \right) } \right)  \leq 0
\end{displaymath}
and
\begin{displaymath}
y(t)\leq y(0) \exp{\left(\int_0^t\theta(s) \;{\rm d}s \right) } .
\end{displaymath}
Therefore
\begin{displaymath}
\|\bfz(t)\|^2_{H} \leq \|\bfz(0)\|^2_{H} \exp{\left(\int_0^t
2c_{\delta} \left( \|\bfu_1(s)\|^8_{L^4(\Omega)^3} +
\|\bfu_2(s)\|^4_{W^{1,2}(\Omega)^3} \right) \;{\rm d}s \right) }
\end{displaymath}
for all $t \in (0,T)$. Now the uniqueness follows from the fact that
$\bfz(0)={\bf0}$.


\bigskip

The author thanks Prof. Petr Ku\v{c}era for numerous suggestions
that improved the presentation of this paper.




\end{document}